\documentclass[12pt]{amsart}
\usepackage{amssymb,latexsym,amsmath,amsfonts}
\usepackage[mathscr]{eucal}
\usepackage{mathrsfs}
\usepackage{multicol}
\usepackage{graphicx}
\usepackage{amscd}
\usepackage{amsthm}
\usepackage{amsxtra}
\usepackage[nointegrals]{wasysym}
\usepackage{bm}
\usepackage{calc}
\usepackage{mathtools}
\usepackage{hyperref}
\usepackage{xcolor}
\hypersetup{colorlinks=true, citecolor=magenta}

\oddsidemargin=-0.2in \evensidemargin=-0.2in \textwidth=6.8in

\setcounter{section}{0}

\newtheorem{theorem}{Theorem}[section]
\newtheorem{lemma}[theorem]{Lemma}
\newtheorem{proposition}[theorem]{Proposition}

\theoremstyle{definition}

\newcommand{\beas}{\begin{eqnarray*}}
\newcommand{\eeas}{\end{eqnarray*}}
\newcommand{\bes} {\begin{equation*}}
\newcommand{\ees} {\end{equation*}}
\newcommand{\be} {\begin{equation}}
\newcommand{\ee} {\end{equation}}
\newcommand{\bea} {\begin{eqnarray}}
\newcommand{\eea} {\end{eqnarray}}

\newcommand{\eps}{\varepsilon}

\newcommand{\zbar}{\overline{z}}
\newcommand{\de} {\delta}
\newcommand{\om}{\omega}
\newcommand{\la}{\lambda}


\newcommand\partl[2]{\dfrac{\partial{#1}}{\partial{#2}}}

\newcommand{\Om}{\Omega}



\newcommand{\rea}{\operatorname{Re}}
\newcommand{\ima}{\operatorname{Im}}

\newcommand{\wt}{\widetilde}


\newcommand{\Cn}{\mathbb{C}^n}
\newcommand{\C} {\mathbb{C}} 

\newcommand{\rl}{\mathbb{R}}

\begin{document}
\title[Rational convexity and the Lagrangian condition for immersions]{A complex-analytic characterization of Lagrangian immersions in $\Cn$ with transverse double points}
\author{Purvi Gupta}
\address{Department of Mathematics, Indian Institute of Science, Bangalore 560012, India}
\email{purvigupta@iisc.ac.in}
\author{Rudranil Sahu}
\address{Department of Mathematics, Indian Institute of Science, Bangalore 560012, India}
\email{sahurudranil2016@gmail.com}
\date{\today}
\begin{abstract} Given a compact smooth totally real immersed $n$-submanifold $M\subset\Cn$ with only finitely many transverse double points, it is known that if $M$ is Lagrangian with respect to some K{\"a}hler form on $\Cn$, then it is rationally convex in $\Cn$ (Gayet, 2000), but the converse is not true (Mitrea, 2020). We show that $M$ is Lagrangian with respect to some K{\"a}hler form on $\Cn$ if and only if $M$ is rationally convex {\em and} at each double point, the pair of transverse tangent planes to $M$ satisfies the following diagonalizability condition: there is a complex linear transformation on $\Cn$ that maps the pair to $\left(\rl^n,(D+i)\rl^n\right)$ for some real diagonal $n\times n$ matrix $D$.
\end{abstract}
\maketitle

\section{Introduction}
A compact set $K\subset \Cn$ is said to be rationally convex if its complement in $\Cn$ is a union of complex hypersurfaces in $\Cn$. Rational convexity endows a compact set $K\subset\Cn$ with the approximation-theoretic feature that any function holomorphic on some neighborhood of $K$ is uniformly approximable on $K$ by restrictions of rational functions on $\Cn$ with poles off of $K$. A remarkable symplectic characterization of rational convexity was established by Duval ($n=2$) and Duval--Sibony ($n\geq 2$) for compact totally real submanifolds of $\Cn$, see \cite{Du91,Du94,DuSi95}. They proved that the image of a totally real smooth embedding $\iota:M\rightarrow\Cn$ of a compact smooth manifold $M$ is rationally convex in $\Cn$ if and only if the image is isotropic, or Lagrangian, if $\text{dim}_{\rl}M=n$, with respect to some K{\"a}hler form on $\Cn$, i.e., there is a strictly plurisubharmonic function $\rho:\Cn\rightarrow\rl$ such that $\iota^*dd^c\rho=0$. In \cite{Ga00}, Gayet considered the case where $M$ is $n$-dimensional and $\iota:M\rightarrow\Cn$ is a totally real immersion that self intersects in only finitely many transverse double points. He showed that if $\iota(M)$ is Lagrangian with respect to some K{\"a}hler form on $\Cn$, then it is rationally convex. This result was extended to immersions with finitely many quadratic self-tangencies by Duval--Gayet in \cite{DuGa08}. In \cite{Mi20}, Mitrea gave examples to show that the converse may not hold in the presence of self-intersections. In particular, there exist pairs $(R,S)$ of $n$-dimensional totally real planes in $\Cn$ intersecting only at the origin such that $R\cup S$ is rationally convex, but $R\cup S$ is not Lagrangian with respect to {\em any} K{\"a}hler form on $\Cn$. Nevertheless, Mitrea showed that if $\iota:M\rightarrow\Cn$ is a totally real immersion with finitely many finite self-intersection points and $\iota(M)$ is rationally convex, then there is a nonnegative $(1,1)$-form $\omega$, positive outside an arbitrarily small neighborhood of the self-intersection set of $\iota(M)$, so that $\iota^*\omega=0$, see \cite[Theorem 1.1]{Mi20}.

In this article, we consider the case of immersed $n$-dimensional submanifolds of $\Cn$ that admit only isolated self-intersection points, each of which is a transverse double point. This was the case considered by Gayet in \cite{Ga00}. We give the precise additional hypothesis required at the double points to obtain a converse of Gayet's theorem. For this, we need a definition. Let $\iota:M\rightarrow\Cn$ be an immersion of a smooth real $n$-dimensional manifold $M$ into $\Cn$. Assume that $\iota$ is totally real, i.e., for every $z\in M$, the real subspace $\iota_*(T_zM)$ of $\Cn$ contains no complex lines. Let $p\in\iota(M)$ be an isolated transverse double point of $\iota(M)$, and $z,w\in \iota^{-1}(p)$ be distinct pre-images of $p$. Since $\iota_*(T_zM)$ is a totally real plane, there is a nonsingular complex linear transformation $G:\Cn\rightarrow\Cn$ that maps $\iota_*(T_zM)$ to $\rl^n=\{z\in\Cn:\ima z=0\}$. Due to transversality, there is a real $n\times n$ matrix $A$ (depending on $G$) such that $G$ maps $\iota_*\left(T_wM\right)$ to $(A+i)\rl^n=\{z\in\Cn:\rea z=Ax \text{ and }\ima z=x\text{ for some }x\in\rl^n\}$; see \cite[\S 2]{We88}. We say that $p$ is a {\em diagonalizable} double point of $\iota(M)$ if $A$ is diagonalizable over the reals. This condition is independent of the choice of $G$. Our main result is as follows. 

\begin{theorem}\label{T:main} Let $M$ be a compact smooth manifold of real dimension $n$. Let $\iota:M\rightarrow\Cn$ be a totally real immersion such that $\iota(M)$ is a smooth submanifold of $\Cn$, except at finitely many points $p_1,...,p_m\in\iota(M)$, which are transverse double points of $\iota(M)$. Then, $\iota(M)$ is Lagrangian with respect to some K{\"a}hler form on $\Cn$ if and only if $\iota(M)$ is rationally convex and $p_1,...,p_m$ are diagonalizable double points of $\iota(M)$.      
\end{theorem}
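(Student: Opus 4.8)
The plan is to distill the double-point hypothesis into a statement about pairs of planes, use that for the (easy) necessity direction, and for sufficiency combine Mitrea's theorem with a local construction near each double point.

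\emph{A linear model for the double points.} The first step is to prove the following pointwise fact: for a transverse pair $(R,S)$ of totally real $n$-planes in $\Cn$, the pair is diagonalizable in the sense defined above if and only if there is a positive $(1,1)$-form on $\Cn$ with respect to which both $R$ and $S$ are Lagrangian. After a complex linear change of coordinates one may take $R=\rl^n$ and, by transversality and total reality, $S=(A+i)\rl^n$ with $A$ a real $n\times n$ matrix. Restricting the standard form $dd^c\|z\|^2$ along the parametrization $t\mapsto(At,t)$ of $S$ one computes, up to a positive constant, the alternating form $(u,v)\mapsto\langle(A-A^{\mathsf T})u,v\rangle$ on $\rl^n$; hence $S$ is Lagrangian for $dd^c\|z\|^2$ exactly when $A$ is symmetric. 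A real symmetric matrix is orthogonally diagonalizable, and conversely if $A=BDB^{-1}$ with $D$ real diagonal then $B^{-1}$ carries $(R,S)$ to $(\rl^n,(D+i)\rl^n)$, a pair plainly Lagrangian for $dd^c\|z\|^2$; this settles one implication. For the other, normalize the given positive $(1,1)$-form to $dd^c\|z\|^2$ by a complex linear map and then apply an element of $U(n)$ — which acts transitively on the Lagrangian Grassmannian by complex linear symplectomorphisms — to move the image of $R$ onto $\rl^n$; the image of $S$ is then $(A'+i)\rl^n$ with $A'$ symmetric by the computation above, so the pair is diagonalizable. Independence from the auxiliary linear map is automatic, as two admissible choices conjugate $A$ by a real matrix. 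With this in hand the necessity direction is quick: if $\iota(M)$ is Lagrangian for $dd^c\rho$ with $\rho$ strictly plurisubharmonic, then $\iota(M)$ is rationally convex by Gayet's theorem \cite{Ga00}, and at a double point $p_i$ with preimages $z,w$ the constant-coefficient form $(dd^c\rho)_{p_i}$ is positive and, since $\iota^*dd^c\rho=0$, vanishes on both $\iota_*(T_zM)$ and $\iota_*(T_wM)$; the linear model then says $p_i$ is diagonalizable.

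\emph{Sufficiency.} Assume $\iota(M)$ is rationally convex and each $p_i$ is diagonalizable. By \cite[Theorem 1.1]{Mi20} there is a smooth plurisubharmonic $\rho_M$ on $\Cn$ with $\iota^*dd^c\rho_M=0$ and $dd^c\rho_M>0$ off $\bigcup_iU_i$, the neighborhoods $U_i\ni p_i$ being as small as we please. I would reduce the theorem to a local \emph{repair lemma}: near a diagonalizable transverse double point $p$ there is a neighborhood $W$ and a strictly plurisubharmonic $\tau$ on $W$ with $\iota^*dd^c\tau=0$ on $\iota^{-1}(W)$, and moreover $\tau$ can be arranged to coincide with $C\rho_M$ up to an additive constant near $\bdy W$, for a suitably large $C>0$. (Conversely any such $\tau$ forces $p$ to be diagonalizable, by the linear model applied to $(dd^c\tau)_p$ — so the hypothesis is exactly what is needed.) Given the repair lemma, shrink the $U_i$ and choose disjoint $W_i\supset\overline{U_i}$ avoiding the other $p_j$, then set $\rho:=\tau_i$ (minus the appropriate constant) on $W_i$ and $\rho:=C\rho_M$ off $\bigcup_iW_i$: this is smooth, strictly plurisubharmonic (it is $C\rho_M$ on a set contained in $\Cn\setminus\bigcup_iU_i$, where $\rho_M$ is strictly plurisubharmonic, and is a $\tau_i$ on each $W_i$), and satisfies $\iota^*dd^c\rho=0$, so $dd^c\rho$ is the desired K\"ahler form.

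\emph{The main obstacle.} The heart of the argument is the repair lemma, i.e.\ solving $\iota^*dd^c\tau=0$ with $\tau$ strictly plurisubharmonic in the presence of the two curved, transversally intersecting sheets $M_1,M_2$ of $\iota(M)$ at $p$. Normalizing so that $T_pM_1=\rl^n$ and $T_pM_2=(D+i)\rl^n$ with $D$ diagonal, the candidate $\|z\|^2$ already kills $dd^c$ along each sheet to first order at $p$; one must correct it, by some $u=O(\|z\|^3)$, to all orders. Along a single totally real sheet the equation $\iota^*dd^c(\|z\|^2+u)=0$ is unobstructed — it amounts to prescribing the first normal derivatives of $u$ along the sheet as an exact one-form, leaving the value and the higher normal data free — so one produces formal (infinite-jet) solutions along $M_1$ and along $M_2$ separately; the two are compatible at $p$ precisely because both tangent planes are Lagrangian for $dd^c\|z\|^2$, which is the diagonalizability hypothesis. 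Since $M_1\cap M_2=\{p\}$ is transverse, a Borel--Whitney extension assembles these jets into an honest $C^\infty$ function $u$, and $\|z\|^2+u$ is strictly plurisubharmonic on a small enough $W$. Finally, the relative refinement (and hence the gluing) hinges on one further technical point that I expect to be the most delicate: on the annular region where $\tau$ is interpolated toward $C\rho_M$ the self-intersection set consists of two \emph{disjoint} totally real sheets, so the cut-off functions used in the interpolation can be chosen adapted to each sheet separately, which is what keeps the interpolation in the kernel of $\iota\mapsto\iota^*dd^c(\,\cdot\,)$; the analogous local-straightening, local polynomial-convexity, and current-theoretic facts for a single totally real submanifold are exactly those underlying the embedded case of Duval--Sibony \cite{DuSi95}, which would be invoked here.
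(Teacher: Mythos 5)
Your overall architecture (linear model for pairs of planes, Gayet for necessity, Mitrea plus a local construction plus gluing for sufficiency) matches the paper's, and two of your three ingredients are sound. Your treatment of the linear model is in fact a genuinely different and cleaner argument than the paper's: you normalize the constant-coefficient positive $(1,1)$-form to $dd^c\Vert z\Vert^2$, use transitivity of $U(n)$ on the Lagrangian Grassmannian to put the first plane at $\rl^n$, and read off from the computation $\iota_A^*dd^c\Vert z\Vert^2\sim\langle (A-A^T)u,v\rangle$ that the second plane is $(A'+i)\rl^n$ with $A'$ symmetric, hence diagonalizable. The paper instead characterizes, for a general Hermitian matrix $(h_{jk})$, when $\rl^n\cup S(A)$ is Lagrangian (its conditions \eqref{E:Lag1}--\eqref{E:Lag2}) and then rules out each non-diagonal real Jordan block by exhibiting a forced degeneracy ($h_{11}=0$ or $h_{11}+h_{22}=0$). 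Your route avoids the case analysis entirely; the paper's is more computational but self-contained. The necessity direction is then identical in both.

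The gap is in the sufficiency direction, and it is concentrated exactly where you flag "the most delicate technical point." Your plan rests on a \emph{repair lemma} producing a strictly plurisubharmonic $\tau$ on $W\ni p_i$ with $\iota^*dd^c\tau=0$ that moreover \emph{coincides with $C\rho_M$ up to a constant near $\bdy W$}. That boundary-matching requirement is not established by your jet/Borel--Whitney construction (which at best gives $\iota^*dd^c\tau=0$ with $\tau$ unrelated to $\rho_M$), and it is genuinely hard: it is essentially the whole gluing problem restated. The paper resolves this by proving a \emph{stronger} local statement in a different direction: near each diagonalizable double point it constructs $f_j$ with $\iota^*(d^c\!f_j)=0$ --- the primitive one-form itself pulls back to zero on the manifold, not merely its exterior derivative --- together with $dd^c\!f_j>0$. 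This is achieved by solving an explicit first-order system ($Vr=p$ on one sheet, $Wr=q$ on the other, after straightening one sheet), not by formal jets. With that stronger property, the global form is taken to be $\omega=C\,dd^c\varphi+\sum_j d(\chi_j d^c\!f_j)$; the cutoff cross-terms $d\chi_j\wedge d^c\!f_j$ then pull back to zero on $\iota(M)$ precisely because $\iota^*d^c\!f_j=0$, and positivity on the annuli is bought by taking $C$ large where $\varphi$ is strictly plurisubharmonic. If your local lemma only controls $\iota^*dd^c\tau$, the analogous cutoff terms do not die on $\iota(M)$ and the glued form fails to be Lagrangian; so either you must prove the boundary-matching version of your repair lemma (which you have not), or you must upgrade the local construction to kill $\iota^*d^c\tau$ as the paper does. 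As written, the sufficiency argument is incomplete.
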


The forward implication follows from Gayet's aforementioned result in \cite{Ga00}, and an inspection of the linear model of a transverse double point, i.e., the union of two $n$-dimensional totally real planes $R$ and $S$ in $\Cn$ that intersect only at the origin. In \cite{We88}, Weinstock showed that $R\cup S$ is locally polynomially (and rationally) convex at the origin if and only if, after a complex linear transformation, $R=\rl^n$ and $S=(A+i)\rl^n$ for some real $n\times n$  matrix $A$ that has no purely imaginary eigenvalues of modulus greater than one. In Section~\ref{S:planes}, we show that $R\cup S$ is Lagrangian with respect to some K{\"a}hler form on $\Cn$ if and only if $A$ is diagonalizable over the reals, see Proposition~\ref{P:planes}. For the converse, we construct a local K{\"a}hler form near each diagonalizable double point, with respect to which the immersed manifold is locally Lagrangian, see Section~\ref{S:local}, and patch these local K{\"a}hler forms with the global ``degenerate" K{\"a}hler form granted by Mitrea's result in \cite{Mi20} to complete the proof of Theorem~\ref{T:main}, see Section~\ref{S:proof}.

\section{Preliminaries}\label{S:prelim}
In this section, we collect the notation that will be used throughout this paper. 

A point $z\in\Cn$ is denoted in complex coordinates by $(z_1,...,z_n)$, where  $z_j=x_j+iy_j\in\C$, $1\leq j\leq n$. The origin in any Euclidean space is denoted by $0$. 
\smallskip 

\noindent {\bf Fr{\'e}chet derivatives.}  Vectors in $\rl^n$ are treated, by default, as column matrices. For a smooth $\rl^m$-valued map $f$ on an open set $\Om\subset\rl^n$, the Fr{\'e}chet derivative of $f$ at $z\in\Om$ is viewed as an $m\times n$ matrix, and is denoted by $(Df)(z)$. Identifying real $m\times n$ matrices with $\rl^{mn}$ in the natural way, the Fr{\'e}chet derivative of $Df:\Om\rightarrow\rl^{mn}$ at $z\in\Om$ is denoted by $(D^2f)(z)$, and is viewed as an $(mn)\times n$ matrix. In the special case where $m=1$, the $1\times n$ row matrix $(Df)(z)$ is denoted by $(\nabla f)(z)$ and the $n\times n$ matrix $(D^2f)(z)$ is denoted by $(\operatorname{Hess}f)(z).$ If $f$ is defined on an open set in $\Cn$, 
\beas
(\operatorname{Hess}_{xx}f)(x,y)&=&(\operatorname{Hess}f_y)(x),\quad \text{where }f_y:x\mapsto f(x,y),\\
(\operatorname{Hess}_{yy}f)(x,y)&=&(\operatorname{Hess}f_x)(y),\quad \text{where }f_x:y\mapsto f(x,y),\\
(\operatorname{Hess}_{xy}f)(x,y)&=&(DF_x)(y), \quad \text{where }F_x:y\mapsto (\nabla_xf)(x,y),\\
(\operatorname{Hess}_{yx}f)(x,y)&=&(DF_y)(x), \quad \text{where }F_y:x\mapsto (\nabla_yf)(x,y).
\eeas
Note that $(\operatorname{Hess}_{yx}f)=(\operatorname{Hess}_{xy}f)^T$.
\smallskip

\noindent {\bf Matrices and matrix-valued functions.} Let $M_{m\times n}(\rl)$ denote the set of all real $m\times n$ matrices. The $n\times n$ identity matrix is denoted by $I_n$. The zero matrix of any size is denoted by $0$. The transpose of a matrix $A$ is denoted by $A^T$. Matrix multiplication is denoted by $\cdot$, i.e., for compatible matrices $A$ and $B$, $A\cdot B$ denotes their matrix product. Given a smooth $(m\times n)$-matrix valued function $A$ and a smooth $(n\times p)$-matrix valued function $B$ on an open set $\Om\subset\rl^k$, we use the following product rule for differentiation:
\be\label{E:prod}
D(A\cdot B)=\left(B^T\otimes I_m\right)\cdot (DA)+\left(I_p\otimes A\right)\cdot (DB),
\ee
where $\otimes$ denotes the Kronecker product between matrices given by 
    \bes
        (a_{jk})_{m\times n}\otimes B_{r\times s}=\begin{pmatrix} a_{11}B & \cdots & a_{1n}B\\
        \vdots &\ddots &\vdots\\
            a_{m1}B & \cdots & a_{mn}B\end{pmatrix}_{mr\times ns}.
    \ees 

\noindent{\bf Vector fields.} Given a row matrix $V=(V_1,...,V_m)$ of real vector fields on an open set $\Om\subset\rl^n$ and an $(m\times p)$ matrix of functions $A=(a_{jk})$ on $\Om$, $V\cdot A$ denotes the row matrix of real vector fields given by 
    \be\label{E:VF}
        V\cdot A=\left(\sum_{k=1}^m{a_{1k}V_k},...,\sum_{k=1}^m{a_{pk}V_k}\right).
    \ee
If $G$ is a smooth map on $\Om$, then 
    \be\label{E:PushVF}
        G_*V=(G_*V_1,...,G_*V_n). 
    \ee

\noindent{\bf The $\boldsymbol{dd^c}$ operator.} The Dolbeault operator and its complex conjugate on $\Cn$ are given by
\bes
    \partial f=\sum_{j=1}^n\partl{}{z_j}dz_j\quad\text{and}\quad  \overline{\partial} f=\sum_{j=1}^n\partl{}{\zbar_j}d\zbar_j. 
\ees
Then, the usual exterior derivative (acting on $0$-forms) may be expressed as $d=\partial +\overline\partial $. Consider the differential operator $d^c=i(\overline\partial-\partial)$. Then, for a smooth $\rl$-valued function $f$ on some open set $\Om\subset\Cn$,
    \bes        d^cf=\sum_{j=1}^n\left(\partl{f}{x_j}dy_j-\partl{f}{y_j}dx_j\right),
    \ees
in terms of real coordinates. 
We use the notation $dd^cf>0$ on $\Om$ to denote the fact that the alternating $(1,1)$-form $(dd^cf)(z)$ is (strictly) positive definite for each $z\in\Om$. The following chain rule for the operator $dd^c$ will be useful. Let $G=(G_1,...,G_n)$ be a smooth $\Cn$-valued map on open set $\Om\subset\Cn$ such that $G(\Om)$ is an open set in $\Cn$. For a smooth function $f:G(\Om)\rightarrow\rl$, 
    \bea\label{E:chain}
        dd^c(f \circ G) &=&\ 2i \sum_{j,k=1}^n \left( \frac{\partial^2 f}{\partial z_j \partial \zbar_k} \circ G \right) 
\left(\partial G_j \wedge \overline{\partial}\, \overline{G}_k - \overline{\partial} G_j \wedge \partial \overline{G}_k \right)\notag\\ 
&&\quad + 2i \sum_{j,k=1}^n \left( \frac{\partial^2 f}{\partial z_j \partial z_k} \circ G \right) 
\partial G_j \wedge \overline\partial G_k
+ 2i \sum_{j,k=1}^n \left( \frac{\partial^2 f}{\partial \zbar_j \partial \zbar_k} \circ G \right) 
\partial\, \overline{G}_j \wedge \overline\partial\,\overline{G}_k \\
&&\quad\quad +2i \sum_{k=1}^n \left( \frac{\partial f}{\partial z_k} \circ G \right) \partial\overline{\partial} G_k 
+ 2i \sum_{k=1}^n \left( \frac{\partial f}{\partial \zbar_k} \circ G \right) \partial\overline{\partial} \,\overline{G}_k.\notag
    \eea

\section{The Lagrangian property for a union of planes}\label{S:planes}

In this section, we give a necessary and sufficient condition for a pair of real $n$-dimensional totally real planes intersecting only at the origin to have the property that their union is Lagrangian with respect to a K{\"a}hler form on $\Cn$. As observed in \cite[\S~2]{We88}, any nonsingular complex linear transformation that maps one of the two planes to $\rl^n=\{z\in\Cn:\ima z=0\}$, maps the other plane to $S(A)=(A+i)\rl^n$ for some $A\in M_{n\times n}(\rl)$. We note an elementary result that will be used throughout the paper. 

\begin{lemma}\label{L:bihol} Let $\iota:M\subset\Cn$ be a smooth embedding of a smooth real $n$-dimensional manifold $M$. Let $\omega$ be a K{\"a}hler form on some neighborhood of $\iota(M)$ such that $\iota(M)$ is Lagrangian with respect to $\omega$, i.e. $i^*\omega=0$. Let $G$ be a biholomorphism on a neighborhood of $\iota(M)$. Then, $(G\circ\iota)(M)$ is Lagrangian with respect to the K{\"a}hler form ${G^{-1}}^*\omega$.
\end{lemma}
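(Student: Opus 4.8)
The plan is to verify separately the two defining properties of the conclusion: that $\wt{\omega}:=(G^{-1})^*\omega$ is again a K{\"a}hler form on a neighborhood of $(G\circ\iota)(M)$, and that $(G\circ\iota)^*\wt{\omega}=0$.

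For the first property, let $U$ be the domain of $\omega$ and $V$ the domain of $G$. Then $\wt{\omega}$ is a well-defined smooth $2$-form on $G(U\cap V)$, which is an open neighborhood of $(G\circ\iota)(M)$ since $\iota(M)\subset U\cap V$ and $G$ is a homeomorphism onto its image. Because $G^{-1}$ is holomorphic, pullback under $G^{-1}$ preserves the bidegree $(1,1)$ and commutes with $d$, so $\wt{\omega}$ is a closed $(1,1)$-form. It is also strictly positive: for $w\in G(U\cap V)$ and a nonzero real tangent vector $u$ at $w$,
\[
\wt{\omega}(u,Ju)=\omega\big((G^{-1})_*u,\,(G^{-1})_*(Ju)\big)=\omega\big((G^{-1})_*u,\,J(G^{-1})_*u\big)>0,
\]
where $J$ is the standard complex structure on $\Cn$; here we used that $G^{-1}$ is holomorphic, so $(G^{-1})_*$ is complex linear, and that $DG^{-1}$ is invertible, so $(G^{-1})_*u\neq 0$. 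Hence $\wt{\omega}$ is a K{\"a}hler form on $G(U\cap V)$. Equivalently, writing $\omega=dd^c\rho$ locally with $\rho$ strictly plurisubharmonic, the chain rule \eqref{E:chain} applied to the holomorphic map $G^{-1}$ --- for which every term involving $\overline{\partial}$ of a component of $G^{-1}$, or $\partial$ of a conjugated component, vanishes --- gives $\wt{\omega}=dd^c(\rho\circ G^{-1})$ locally, and $\rho\circ G^{-1}$ is again strictly plurisubharmonic.

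For the second property, the contravariance of pullback together with $G^{-1}\circ G=\operatorname{id}$ gives
\[
(G\circ\iota)^*\wt{\omega}=\iota^*\,G^*\,(G^{-1})^*\omega=\iota^*\,(G^{-1}\circ G)^*\omega=\iota^*\omega=0,
\]
so $(G\circ\iota)(M)$ is Lagrangian with respect to $\wt{\omega}$. There is essentially no obstacle in this argument; the only points deserving (routine) care are the invariance of strict positivity of a $(1,1)$-form under pullback by a map with complex-linear invertible differential --- which applies to the biholomorphism $G^{-1}$ --- and keeping track of the order in which the pullbacks compose.
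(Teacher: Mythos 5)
Your proof is correct; the paper states this lemma as an elementary fact and omits the proof entirely, and your argument (holomorphic pullback preserves closedness, bidegree, and positivity of $(1,1)$-forms, plus $(G\circ\iota)^*(G^{-1})^*\omega=\iota^*\omega=0$ by functoriality) is exactly the standard verification the authors intend. No gaps.
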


By the above lemma, it suffices to assume that the given pair of totally real planes is of the form $(\rl^n,S(A))$ for some $A\in M_{n\times n}(\rl)$. We then have the following result. 

\begin{proposition}\label{P:planes} Let $A\in M_{n\times n}(\rl)$ and $S(A)=(A+i)\rl^n$. Then, $\rl^n\cup S(A)$ is Lagrangian with repsect to some K{\"a}hler form on $\Cn$ if and only if $A$ is diagonalizable over the reals. 
\end{proposition}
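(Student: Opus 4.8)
The plan is to work with an explicit family of candidate Kähler potentials and reduce the Lagrangian condition to a linear-algebraic identity about $A$. First I would observe that any Kähler form $\omega = dd^c\rho$ on a neighborhood of $\rl^n\cup S(A)$ restricts to $\rl^n$ and to $S(A)$, and that the restriction to the \emph{linear} plane $\rl^n = \{y=0\}$ depends only on the $2$-jet of $\rho$ along $\rl^n$, and similarly along $S(A)$. Since $\rl^n$ and $S(A)$ meet only at $0$, away from the origin the Lagrangian condition on each plane is a condition on the (pulled-back) Hessian of $\rho$; the only place the two conditions interact is at the origin. So the problem is genuinely \emph{local at $0$}, and moreover one expects that if a global Kähler form exists then a quadratic potential $\rho(z) = \rea(z^T P z) + z^* Q z$ (with $Q$ Hermitian positive definite, $P$ complex symmetric) should already work — because the obstruction is second-order. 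Concretely, write $\rho$ in real coordinates $(x,y)$ as a quadratic form $\tfrac12(x,y)^T H (x,y)$ with $H$ a symmetric $2n\times 2n$ block matrix $\begin{pmatrix} H_{xx} & H_{xy}\\ H_{xy}^T & H_{yy}\end{pmatrix}$; plurisubharmonicity is positivity of $dd^c\rho$, which in these coordinates is the standard condition that the Hermitian matrix built from $H$ is positive definite (equivalently $H_{xx}+H_{yy} \succ 0$ together with the appropriate symmetrized cross term).

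Next I would compute the two restriction conditions. Parametrize $\rl^n$ by $x\mapsto (x,0)$ and $S(A)$ by $t\mapsto (At, t)$. The pullback of $\omega=dd^c\rho$ to $\rl^n$ is, up to a constant, the $2$-form associated to the skew-symmetrization of $H_{xy}$ restricted to the $x$-variables; vanishing forces $H_{xy} - H_{xy}^T$ to annihilate $\rl^n\times\rl^n$, i.e. (for a global quadratic potential) $H_{xy}$ symmetric. The pullback to $S(A)$ is computed by substituting $x=At$, $y=t$ and collecting the $dt_j\wedge dt_k$ coefficients; this yields the condition that $A^T H_{xx} + H_{xy}^T A + A^T H_{xy} + H_{yy}$ — more precisely its antisymmetric part — vanishes. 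Using the $d^c$ formula from the preliminaries and the chain rule \eqref{E:chain} applied to the linear map $G(z) = (A+i)$-parametrization, these two vanishing conditions, together with the positivity of the Hermitian form attached to $H$, become a system that has a solution $H$ \emph{if and only if} $A$ is $\rl$-diagonalizable. The hard direction is: given $H_{xy}$ symmetric, $H_{xx}, H_{yy}$ making the whole thing positive, and the $S(A)$-condition, extract diagonalizability of $A$; the easy direction is to exhibit, for $A = Q^{-1}DQ$ with $D$ real diagonal, an explicit $H$ (essentially transporting the obvious solution for diagonal $A$ — where $\rho = \sum |z_j|^2$ type potentials work after accounting for the diagonal entries — back by the real change of variables $Q$).

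The main obstacle, and the place where the diagonalizability hypothesis is truly used, is the necessity direction: one must show that if no $\rl$-diagonalization exists (i.e. $A$ has a nontrivial Jordan block, possibly with complex eigenvalues) then the above linear system is infeasible \emph{under the positivity constraint}. My plan here is to simultaneously ``normalize'' the pair $(\rl^n, S(A))$ and the candidate $H$: by Lemma \ref{L:bihol} and Weinstock's classification, I may conjugate $A$ by real similarity and by the action that preserves $\rl^n$ while changing $A$; under this action $A$ can be put in real Jordan form, and the Lagrangian-plus-positivity system transforms equivariantly. Then a block-by-block analysis shows that a single real Jordan block of size $\geq 2$ (resp. a complex-eigenvalue block) forces the antisymmetric part of $A^T H_{xx} + H_{xy}^T A + A^T H_{xy} + H_{yy}$ to have a nonzero entry for every positive-definite-compatible choice of $H$ — the skew part of $A^T(\cdot)$ cannot be cancelled by a symmetric $H_{xy}$ when $A$ is not normal in the relevant sense. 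I expect the cleanest route is to contract the $S(A)$-condition against well-chosen vectors coming from the generalized eigenspaces and derive a contradiction with $H_{xx}+H_{yy}\succ 0$. This is the computational heart of the argument, and I would isolate it as a short lemma in linear algebra about when $\mathrm{skew}(A^T M) = \mathrm{skew}(N)$ is solvable with $M\succ 0$ and $N$ symmetric prescribed up to the freedom available.
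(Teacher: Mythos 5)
Your plan follows essentially the same route as the paper: reduce to a constant-coefficient (quadratic-potential) K\"ahler form, observe that the Lagrangian conditions on $\rl^n$ and $S(A)$ become ``the coefficient matrix is real symmetric'' together with ``$MA$ is symmetric for some positive definite symmetric $M$,'' and then handle necessity by putting $A$ in real Jordan form and extracting a contradiction with positivity from a single nondiagonal block (real or complex eigenvalue), exactly as in the paper's computation of $c_{12}(J(A))$. The only caveat is that your displayed formula for the $S(A)$-pullback is not quite right (the correct skew-part condition is that $(H_{xx}+H_{yy})A$ be symmetric, i.e.\ the paper's condition $\operatorname{skew}((\operatorname{Re}h)A)=0$), but this slip does not affect the structure or viability of the argument.
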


Before we proceed with the proof of this result, we observe that it suffices to work with a real Jordan form of $A$. This is owing to Lemma~\ref{L:bihol} and the following lemma. 

\begin{lemma}[{\cite[Lemma~2]{We88}}]\label{L:diagonalize}    Let $P\in M_{n\times n}(\rl)$ be nonsingular. Then, the complex linear transformation on $\Cn$ given by $P$ maps $\rl^n$ onto $\rl^n$ and $S(A)$ onto $S(PAP^{-1})$. 
\end{lemma}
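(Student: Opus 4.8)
The plan is to verify both claims by direct computation, exploiting the fact that $P$ has real entries: as a complex-linear map on $\Cn$, $P$ commutes with multiplication by $i$, and hence respects the splitting into real and imaginary parts. Concretely, for $x,y\in\rl^n$ one has $P\cdot(x+iy)=Px+iPy$, and in particular $P\cdot((A+i)x)=(PA+iP)x$ for every $x\in\rl^n$. This identity, together with a single change of variables, will give the whole statement.

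First I would dispose of the claim about $\rl^n$. Since $P$ has real entries, $P\cdot x\in\rl^n$ for every $x\in\rl^n$, so $P$ maps $\rl^n$ into $\rl^n$; as $P$ is nonsingular, this restriction is a bijection, and therefore $P(\rl^n)=\rl^n$. For $S(A)=(A+i)\rl^n$, I would parametrize a generic point as $(A+i)x$ with $x\in\rl^n$ and apply $P$, obtaining $P\cdot((A+i)x)=(PA+iP)x$ by the realness of $P$. The substitution $y:=Px$ is a bijection of $\rl^n$ (again by nonsingularity of $P$), and under it $x=P^{-1}y$, so that $(PA+iP)x=PAP^{-1}y+iy=(PAP^{-1}+i)y$. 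As $x$ ranges over all of $\rl^n$, so does $y$, whence the image of $S(A)$ under $P$ is precisely $\{(PAP^{-1}+i)y:y\in\rl^n\}=S(PAP^{-1})$.

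There is no genuine obstacle here; the entire content of the lemma is this reparametrization. The only two points requiring care are that $P$ being \emph{real} is exactly what permits pulling it through the real/imaginary splitting and writing $P(A+i)=PA+iP$, and that nonsingularity of $P$ is invoked twice --- once to conclude surjectivity onto $\rl^n$, and once to make $y=Px$ a legitimate bijective reparametrization --- so that the conclusion is the equality of sets $P(S(A))=S(PAP^{-1})$ rather than a mere inclusion.
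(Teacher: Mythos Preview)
Your proof is correct and follows essentially the same direct computation as the paper; the only cosmetic difference is that the paper establishes the inclusion $S(PAP^{-1})\subset P(S(A))$ and then invokes the symmetry $P\leftrightarrow P^{-1}$, $A\leftrightarrow PAP^{-1}$ for the reverse inclusion, whereas you obtain equality in one stroke by noting that $y=Px$ is a bijection of $\rl^n$.
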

\begin{proof} That $P$ maps $\rl^n$ bijectively onto $\rl^n$ is clear. Let $z\in S(PAP^{-1})$. Then, there is an $x=(x_1,...,x_n)\in \rl^n$ such that \bes
z=\left(PAP^{-1}\right)x+ix=\left(PAP^{-1}\right)x+i(PP^{-1})x=
P\left((A+i)(P^{-1}x)\right)\in P\left( S(A)\right)\ees
since $P^{-1}x\in \rl^n$. Thus, $S(PAP^{-1})\subset P(S(A))$. Applying this to $P^{-1}$ and $PAP^{-1}$ instead of $P$ and $A$, respectively, we obtain that $S(A)\subset P^{-1}S(PAP^{-1})$, which gives the other inclusion.
\end{proof}

\noindent {\em Proof of Proposition~\ref{P:planes}.} We first record a general formula that will be used throughout this proof. Let 
    \bes
        \iota:(t_1,...,t_n)\mapsto (t_1,...,t_n),\quad (t_1,...,t_n)\in\rl^n
    \ees
be a parametrization of $\rl^n$. For a fixed $A=(a_{jk})\in M_{n\times n}(\rl)$, let
    \bes
        \iota_A:(t_1,...,t_n)\mapsto\left(
        \sum_{k=1}^na_{1k}t_k+it_1,...,\sum_{k=1}^na_{nk}t_k+it_n\right),\quad (t_1,...,t_n)\in\rl^n,
    \ees
be a parametrization of $S(A)$. Given a K{\"a}hler form (with constant coefficients)
    \bes        \omega=\sum_{j,k=1}^nh_{jk}dz_j\wedge d\bar{z}_k
    \ees
on $\Cn$, where $h=(h_{jk})\in M_{n\times n}(\C)$ is a positive definite Hermitian matrix, we have that
    \bes
       \iota^*\omega= \sum_{1\leq j<k\leq n} (h_{jk}-h_{kj})dt_j\wedge dt_k= 2i\sum_{1\leq j<k\leq n} \left(\ima h_{jk}\right) dt_j\wedge dt_k,
    \ees
and
    \beas
    \iota_A^*\omega&=&\sum_{j,k=1}^n
        h_{jk}\left(dt_j\wedge dt_k+\sum_{t,m=1}^n(a_{jm}a_{k\ell})dt_m\wedge dt_\ell+i\sum_{\ell=1}^n a_{k\ell} dt_j\wedge dt_\ell
        -i\sum_{m=1}^na_{jm}dt_m\wedge dt_k\right)\\
        &=&\sum_{j,k=1}^n\left(h_{jk}+        \sum_{r,s=1}^nh_{rs}a_{rj}a_{sk}+i\sum_{r=1}^n h_{jr}a_{rk}-i\sum_{s=1}^nh_{sk}a_{sj}\right)dt_j\wedge dt_k\\
        &=&2i\sum_{1\leq j< k\leq n}\left(h_{jk}-h_{kj}+\sum_{r,s=1}^n\left(h_{rs}-h_{sr}\right)a_{rj}a_{sk}+\sum_{r=1}^n \left(h_{jr}+h_{rj}\right)a_{rk}-\sum_{s=1}^n(h_{sk}+h_{ks})a_{sj}\right)dt_j\wedge dt_k.   
    \eeas
Thus, $\rl^n\cup S(A)$ is Lagrangian with respect to $\omega$ if and only if 
    \bea
    \ima h_{jk}&=&0,\quad 1\leq j<k\leq n, \label{E:Lag1}\\
   c_{jk}(A)= \sum_{r=1}^n\left((\rea h_{jr})a_{rk}-(\rea h_{kr})a_{rj}\right)&=&0,\quad 1\leq j<k\leq n.\label{E:Lag2}
    \eea

Let us proceed to prove the claim. First, assume that $A$ is diagonalizable over the reals. By Lemmas~\ref{L:bihol} and \ref{L:diagonalize}, it suffices to consider a real Jordan form $J(A)$ of $A$, which by assumption is a diagonal matrix with real diagonal entries. From \eqref{E:Lag1} and \eqref{E:Lag2}, it follows that $\rl^n\cup S(J(A))$ is Lagrangian with respect to the standard K{\"a}hler form on $\Cn$. 

Conversely, suppose $A\in M_{n\times n}(\rl)$ is such that $\rl^n\cup S(A)$ is Lagrangian with respect to some K{\"a}hler form on $\Cn$. Then, by Lemmas~\ref{L:bihol} and ~\ref{L:diagonalize}, so is $\rl^n\cup S(J(A))$, where $J(A)$ is a real Jordan form of $A$. Suppose $A$ is not diagonalizable. Then, $J(A)$ can be chosen to be of the form 
\bes
J(A)= \begin{pmatrix}
    A_1 &  &       &\\
        &  &\ddots &\\
        &  &       & A_m\\
\end{pmatrix}
\ees
where $A_1$ is either of the form 
\be\label{E:realroot}
A_1=\begin{pmatrix}
    \la & 1 &  & \\
     & \la & \ddots  & \\
     & & \ddots & 1 \\
     &  &  & \la
\end{pmatrix}_{r\times r}
\ee
for some real eigenvalue $\la$ of $A$ and $r\geq 2$, or of the form
\be\label{E:comproot}
\begin{pmatrix}
    C & I_2 &  & \\
     & C & \ddots  & \\
     & & \ddots & I_2 \\
     &  &  & C
\end{pmatrix}_{2k\times2k}
\quad \text{with}\quad 
C=
\begin{pmatrix}
  s & -t\\
  t & s 
\end{pmatrix}
\ee
for some complex eigenvalue $s+it$ of $A$ and $k\geq 1$. We consider the two cases separately.

\noindent \textbf{Case 1.} Assume that $A_1$ is of the form \eqref{E:realroot}. Let
\bes        \omega=\sum_{j,k=1}^nh_{jk}dz_j\wedge d\bar{z}_k
\ees
be a K{\"a}hler form on $\Cn$ so that $\rl^n\cup S(J(A))$ is Lagrangian with respect to $\omega$. Since both $\rl^n$ and $S(J(A))$ are linear subspaces of $\rl^{2n}$, it suffices to consider the case where the $h_{jk}$'s are constant functions. From \eqref{E:Lag2} and the fact that $h_{12}=\overline {h_{21}}$, we obtain that
    \bes
       c_{12}(J(A))=h_{11}+\lambda\left(\rea h_{12}-\rea h_{21}\right)=h_{11}=0
    \ees
which contradicts the fact that 
$(h_{jk})$ is a positive definite matrix.
\medskip

\noindent \textbf{Case 2.} Assume that $A_1$ is of the form \eqref{E:comproot}. Let
\bes        \omega=\sum_{j,k=1}^nh_{jk}dz_j\wedge d\bar{z}_k
\ees
be a K{\"a}hler form on $\Cn$ so that $\rl^n\cup S(J(A))$ is Lagrangian with respect to $\omega$. Since both $\rl^n$ and $S(J(A))$ are linear subspaces of $\rl^{2n}$, as before, it suffices to consider the case where the $h_{jk}$'s are constant functions. Once again, we compute $c_{12}(J(A))$.  From \eqref{E:Lag2} and the fact that $h_{12}=\overline {h_{21}}$, we obtain that
    \bes
       c_{12}(J(A))=-h_{11}t+\rea h_{12}s-\rea h_{21}s-h_{22}t=-t(h_{11}+h_{22})=0,
    \ees
which contradicts the fact that 
$(h_{jk})$ is a positive definite matrix.

Thus, $A$ must be diagonalizable over the reals. 
\qed

\section{Construction of the K{\"a}hler form at diagonalizable transverse points}\label{S:local}

In this section, we inspect a maximal totally real immersed submanifold of $\Cn$ near a single diagonalizable transverse double point, and show that the submanifold is locally Lagrangian with respect to a K{\"a}hler form on a neighborhood of the double point. In order to patch such local K{\"a}hler forms together, as will be done later, we need something stronger: we produce a local K{\"a}hler form $dd^cf$ such that pull-back of $d^cf$ vanishes on the submanifold near the double point. 



\begin{lemma}\label{L:local} Let $\iota:M\rightarrow\Cn$ be a smooth totally real immersion of a smooth $n$-dimensional manifold $M$. Let $p\in \iota(M)$ be an isolated diagonalizable double point of $\iota(M)$. Then, there exist a neighborhood $U$ of $p$ in $\Cn$ and a smooth function $f:U\rightarrow\rl$ such that
\begin{itemize}
    \item [(i)] $\iota^*(d^cf)=0$ on $\iota^{-1}(U)$, and
    \item [(ii)] $dd^cf>0$ on $U$.
\end{itemize}
\end{lemma}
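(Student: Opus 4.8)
The plan is to work locally around the diagonalizable double point $p$ and to exploit the very explicit normal form available there. After choosing coordinates, the immersion $\iota$ near $\iota^{-1}(p)$ presents $\iota(M)$ as the union of two totally real sheets $M_1$ and $M_2$ through the origin. By the definition of a diagonalizable double point, there is a nonsingular complex linear map $G$ so that $G_*(T_zM) = \rl^n$ and $G_*(T_wM) = S(D)$ for a real \emph{diagonal} matrix $D$; by Lemma~\ref{L:bihol} it suffices to produce $f$ for the transformed configuration, so I may assume the two tangent planes are $\rl^n$ and $S(D)=(D+i)\rl^n$ with $D=\operatorname{diag}(\lambda_1,\dots,\lambda_n)$. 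Each sheet is then a totally real graph over its tangent plane: $M_1$ is $\{(x, y): y = \phi(x)\}$ near $0$ with $\phi(0)=0$, $(D\phi)(0)=0$, and $M_2$ is the graph of a map whose linearization is the map $x \mapsto (D+i)x$ of $\rl^n$ onto $S(D)$; explicitly one can parametrize $M_2$ by $t \mapsto (Dt + \psi_1(t), t + \psi_2(t))$ with $\psi_j$ vanishing to second order.

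The key idea is to first handle the linear model and then correct. For the linear model $\rl^n \cup S(D)$ with $D$ real diagonal, Proposition~\ref{P:planes} (its proof, via \eqref{E:Lag1}–\eqref{E:Lag2}) shows the standard Kähler form works, and in fact the standard potential $\rho_0(z) = |z|^2 = \sum (x_j^2 + y_j^2)$ has the property that $\iota^*(d^c\rho_0)$ and $\iota_D^*(d^c\rho_0)$ both vanish: on $\rl^n$ one has $d^c\rho_0 = \sum(x_j\,dy_j - y_j\,dx_j)$, which pulls back to $\sum x_j \cdot 0$; on $S(D)$ with $x_j = \lambda_j t_j$, $y_j = t_j$ one gets $\sum(\lambda_j t_j\, dt_j - t_j \lambda_j\, dt_j) = 0$ — here is exactly where diagonality of $D$ is used (for a non-diagonal $A$ the cross terms $a_{jk}$ with $j \ne k$ survive). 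So the problem reduces to producing, near $0$, a smooth function $f$ with $dd^cf>0$ and with $\iota^*(d^cf)=0$ on both sheets, where $f$ is a perturbation of $|z|^2$ adapted to the curvature of the actual (non-flat) sheets.

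To build $f$, I would look for $f$ of the form $f = |z|^2 + (\text{correction})$, where the correction is designed to kill the pull-back of $d^c|z|^2$ to each curved sheet. The pull-back $\iota^*(d^c|z|^2)$ is a closed $1$-form on the sheet $M_1$ that vanishes to second order at $0$ (since $M_1$ is tangent to $\rl^n$), hence locally equals $dg_1$ for some smooth function $g_1$ on $M_1$ vanishing to third order; similarly on $M_2$. The standard device is: extend these to functions on a neighborhood of $0$ in $\Cn$ and subtract a suitable pluriharmonic-type or real-quadratic-plus-higher-order term so that $d^cf$ pulls back to zero on each sheet; concretely, if $M_1 = \{y = \phi(x)\}$ one can arrange $f$ to depend, near $M_1$, only on the "transverse" variable $y - \phi(x)$ in a way that forces $\iota_1^*(d^cf) = 0$, and symmetrically for $M_2$ using a defining function for $M_2$; the two prescriptions are compatible because the sheets meet transversely and one can partition between two disjoint-away-from-$0$ wedge-like regions, or simply solve the linear-algebra matching condition at the level of $2$-jets and absorb the rest. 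Since adding a term that vanishes to third order at $0$ to $|z|^2$ does not destroy strict plurisubharmonicity on a small enough ball, condition (ii) will hold automatically after shrinking $U$.

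The main obstacle is (i): producing a \emph{single} $f$ whose $d^cf$ pulls back to zero on \emph{both} curved sheets simultaneously, since a naive construction adapted to $M_1$ need not respect $M_2$. I expect to resolve this by working with the two sheets separately on the regions where they dominate — using that $M_1 \setminus \{0\}$ and $M_2 \setminus \{0\}$ are disjoint and that near $0$ each sheet is a graph over its own tangent plane — and glueing the two candidate potentials with a cutoff that is supported away from the opposite sheet, checking that the error introduced by the cutoff again vanishes to high enough order at $0$ and on each $M_i$; the diagonalizability hypothesis is what guarantees that the $2$-jet matching problem (the analogue of \eqref{E:Lag2}) is solvable, i.e. that there is no obstruction at the linear level, after which the nonlinear corrections are routine.
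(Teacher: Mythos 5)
Your starting point coincides with the paper's: reduce to the case where the two tangent planes at $p$ are $\rl^n$ and $(D+i)\rl^n$ with $D$ real diagonal, note that $\rho_0=|z|^2$ already solves the problem for the flat model (and that this is precisely where diagonalizability is used), and look for $f=\rho_0+r$ with $r$ vanishing to high order at $p$. But the heart of the lemma is exactly the step you defer, and your proposed resolution does not work. Since both sheets pass through $p$, there is no cutoff that is identically $1$ near $M_1$ and supported away from $M_2$ in any neighborhood of $p$; every neighborhood of a point of $M_1$ close to $p$ meets $M_2$. If you glue candidates $f_1,f_2$ by $f=\chi f_1+(1-\chi)f_2$, then $\iota_1^*(d^cf)$ picks up the terms $\iota_1^*\bigl((f_1-f_2)\,d^c\chi\bigr)$ and $(1-\chi)\,\iota_1^*\bigl(d^cf_2-d^cf_1\bigr)$, which are nonzero $1$-forms on $M_1$ arbitrarily close to $p$. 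Condition (i) is an exact identity, not an asymptotic one, and it must be exact for the global patching in Section~\ref{S:proof} to go through (there one needs $\iota^*d^c\!f_j=0$ on the nose so that both $\iota^*d\chi_j\wedge\iota^*d^c\!f_j$ and $\chi_j\,d(\iota^*d^c\!f_j)$ vanish); so ``vanishing to high enough order at $0$'' is not sufficient. A further slip: $\iota^*(d^c|z|^2)$ is not a closed $1$-form on a curved totally real sheet --- its differential is the pull-back of the standard K{\"a}hler form, which vanishes only at $p$ --- so it is not locally $dg_1$.

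The missing idea in the paper is that no partition of unity is needed: one solves for a single correction simultaneously on both sheets. Writing $f=\|x\|^2+\|y\|^2+r$, condition (i) on the two graphs becomes the pair of first-order transport equations \eqref{E:PDE1}--\eqref{E:PDE2}, namely $Vr=p$ along $\mathcal R$ and $Wr=q$ along $\mathcal S$, where $V,W$ are explicit vector fields transverse to the respective sheets and $p,q$ vanish to second order at $0$ (this last fact is what diagonalizability buys). A diffeomorphism $\Theta$ straightens $\mathcal S$ to $\{u=0\}$ and $W$ to $\nabla_u$ there, turning $\mathcal R$ into a graph $\{v=\sigma(u)\}$ with $(D\sigma)(0)=A$; the system becomes \eqref{E:NPDE1}--\eqref{E:NPDE2}, and the explicit ansatz $\wt r(u,v)=Q(v)\cdot u+\alpha(u)\cdot(v-\sigma(u))$, with $\alpha$ determined by inverting $C(u)-(D\sigma)(u)\cdot B(u)$, solves both equations at once with vanishing $2$-jet at the origin, so that $dd^c(\rho_0+r)>0$ survives on a small ball. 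Without this simultaneous solve (or some substitute for it), your argument does not establish (i).
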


\begin{proof}
Let $\mathcal R$ and $\mathcal S$ denote the two local branches of $\iota(M)$ at the double point $p$. Due to the hypothesis on $p$, after a complex affine change in coordinates, we may assume that $p$ is the origin $0$, $T_0\mathcal R=\rl^n$, and $T_0\mathcal S=(A+i)\rl^n$, where 
\bes A=(a_{jk})_{n\times n}=\operatorname{diag}(\lambda_1,...,\lambda_n),\quad \text{for some } \lambda_1,...,\lambda_n\in\rl.
\ees
By the implicit function theorem, there exist open balls $\mathcal B_1,...,\mathcal B_4\subset\rl^n$ centered at the origin, and smooth functions $\phi=(\phi_1,...,\phi_n):\mathcal B_1\rightarrow \mathcal B_3$ and $\psi=(\psi_1,...,\psi_n):\mathcal B_2\rightarrow \mathcal B_4$, with 
\be\label{E:phipsi}
\phi(0)=\psi(0)=D\phi(0)=D\psi(0)=0,
\ee
such that $\mathcal R=\left\{z\in \mathcal B_1+i\mathcal B_3: z=t+i\phi(t)\right\}$ and $\mathcal S=\left\{z\in \mathcal B_2+i\mathcal B_4 : z=(A+i)t+\psi(t)\right\}$. Parametrize $\mathcal R$ and $\mathcal S$ locally via the maps
\beas
\imath:(t_1,...,t_n)&\mapsto& (t_1+i\phi_1(t),...,t_n+i\phi_n(t)),\quad t=(t_1,...,t_n)\in \mathcal B_1\\
\jmath:(t_1,...,t_n)&\mapsto& (\la_1t_1+\psi_1(t)+it_1,...,\la_nt_n+\psi_n(t)+it_n),\quad t=(t_1,...,t_n)\in \mathcal B_2.
\eeas

In the rest of the proof, we view $\Cn$ as $\rl^{2n}$ via the identification
\bes
    z=(z_1,...,z_n)\leftrightarrow (x,y),
\ees
where $x=(x_1,...,x_n)$ and $y=(y_1,...,y_n)$. The projection maps $(x,y)\mapsto x$ and $(x,y)\mapsto y$ are denoted by $\pi_x$ and $\pi_y$, respectively. Under this identification, we have that 
    \beas
    \imath(t)&=&(t,\phi(t))\\
    \jmath(t)&=&(A\cdot t+\psi(t),t).
    \eeas
The origin in $\Cn$ is henceforth denoted by $(0,0)$. 

We produce a smooth function $r$ on a neighborhood of the origin in $\Cn$ such that the function 
    \be\label{E:f}
        f(x,y)=\Vert x\Vert^2+\Vert y\Vert^2+r(x,y)
    \ee
satisfies
\begin{itemize}
    \item [(a)] $\imath^*(d^cf)=0$, i.e., 
            \bes
        \sum_{j=1}^n
       \left(\partl{f}{x_j}(t,\phi(t))\left(\sum_{k=1}^n\partl{\phi_j}{t_k}(t)dt_k\right)-\partl{f}{y_j}(t,\phi(t))dt_j\right)=0,
       \ees
    \item [(b)] $\jmath^*(d^cf)(t)=0$, i.e.,
        \bes
        \sum_{j=1}^n
       \left(\partl{f}{x_j}(A\cdot t+\psi(t),t)dt_j-\partl{f}{y_j}(A\cdot t+\psi(t),t)\left(\lambda_jdt_j+\sum_{k=1}^n\partl{\psi_j}{t_k}(t)dt_k\right)\right)=0,
       \ees
    \item [(c)] $dd^cf>0$,
\end{itemize}
on some neighborhood of the origin. We will express row matrices of vector fields on $\rl^{2n}$ in terms of the following two fundamental row matrices of vector fields:
\bes
\nabla_x=\left(\partl{}{x_1},...,\partl{}{x_n}\right)\quad \text{and}\quad \nabla_y=\left(\partl{}{y_1},...,\partl{}{y_n}\right).
\ees
In terms of $r$, (a) and (b) translate to
\bea
(Vr)(t,\phi(t))=p(t) \label{E:PDE1}\\
 (Wr)(A\cdot t+\psi(t),t)=q(t), \label{E:PDE2}
\eea
where $V$ and $W$ are each row matrices of vector fields given by 
    \beas
        V&=&\nabla_x\cdot (D\phi\circ\pi_x)-\nabla_y\\
        W&=&\nabla_x-\nabla_y\cdot (A+D\psi\circ\pi_y),
    \eeas
see the notation established in \eqref{E:VF}, and $p$ and $q$ are row matrices of functions given by
    \beas
        p(t)&=&2\phi(t)^T-2t^T\cdot (D\phi)(t)\\
        q(t)&=&2t^T\cdot (D\psi)(t)-2\psi(t)^T.
    \eeas
It is worth noting here that, owing to \eqref{E:phipsi},
\be\label{E:pq}
p(0)=q(0)=0\quad \text{and}\quad (Dp)(0)=(Dq)(0)=0.
\ee
It suffices to produce a smooth $r$ that satisfies \eqref{E:PDE1} and \eqref{E:PDE2} for $t$ on some open neighborhood of $0\in\rl^n$, and $(dd^cr)(0)=0$. Then, (a)-(c) follow for $f$ defined as in \eqref{E:f}.

For this, we introduce a change of variables $(x,y)\mapsto (u,v)$ that ``straightens" the submanifold $\mathcal S$ to the plane $\{u=0\}$ and the row matrix of vector fields  $W$ along $\mathcal S$ to the row matrix $\nabla_u$ along $\{u=0\}$. Let $\Theta(u,v)=(x(u,v),y(u,v))$ be given by
    \beas
    x(u,v)&=&u+A\cdot v+\psi(v)\\
    y(u,v)&=&v-\left(A+(D\psi)(v)\right)\cdot u.
    \eeas
Clearly, $\Theta$ is a smooth map (where defined) with $\Theta(0,0)=(0,0)$ and 
\bes
(D\Theta)(0,0)=\begin{pmatrix} I_n & A \\ -A & I_n\end{pmatrix}.
\ees
Thus, $\Theta$ is a smooth diffeomorphism from some neighborhood of the origin on to another. Let 
    \bes
        \Xi(x,y)=(u(x,y),v(x,y))
    \ees
denote the inverse of $\Theta$. Then,
    \be\label{E:inv}
        (D\Xi)(0,0)=
        \begin{pmatrix} D_xu & D_yu \\ D_xv & D_yv\end{pmatrix}(0,0)=\frac{1}{\prod_{j=1}^n(1+\lambda_j^2)}\begin{pmatrix} I_n & -A \\ A & I_n\end{pmatrix}.
    \ee
We claim the following properties of $\Theta$ (on some fixed neighborhood of $0\in\Cn$):
\begin{itemize}
    \item [(i)] $y(0,v)=v$,
    \item [(ii)] $\Theta\left(\{u=0\}\right)=\mathcal S$,
    \item [(iii)] there is a smooth $\rl^n$-valued map $\sigma$ on a neighborhood of $0\in\rl^n$ with $\sigma(0)=0$ and $D\sigma(0)=A$ such that 
    \bes
       \Theta\left(\{(u,v):v=\sigma(u)\}\right)= \mathcal R,
    \ees

    \item [(iv)] $\Theta_*\left(\nabla_u\Big|_{\{u=0\}}\right)=W\Big|_{\mathcal S}$, see the notation established in \eqref{E:PushVF},
    \item [(v)] $\Theta_*\left(\nabla_u\cdot b+\nabla_v\cdot c\right)=V$, where $b$ and $c$ are $(n\times n)$-matrix-valued maps given by
        \bea           b(u,v)&=&\left(D_xu\cdot (D\phi\circ \pi_x)-D_yu\right)(\Theta(u,v))
        \label{E:b}\\
        c(u,v)&=&\left(D_xv\cdot (D\phi\circ \pi_x)-D_yv\right)(\Theta(u,v)).
        \label{E:c}
        \eea
\end{itemize}
    Properties (i), (ii), (iv) and (v) follow from direct computation. Property (iii) follows from the implicit function theorem once we note that
    \beas
        \Theta^{-1}(\mathcal R)&=&\{(u,v):y(u,v)=\phi(x(u,v))\}\\
        &=&\{(u,v):F(u,v)=v-\left(A+(D\psi)(v)\right)\cdot u-\phi\left(u+A\cdot v+\psi(v)\right)=0\},
    \eeas
and, furthermore, $(D_uF)(0,0)=-A$ and $(D_vF)(0,0)=I_n$.

It now suffices to produce a smooth function $\wt r$ on a neighborhood of $0\in\Cn$ such that 
\bea
(\Theta^{-1}_*V)\wt r\Big|_{\Theta^{-1}(\mathcal R)}=(p\circ\pi_x\circ\Theta)\Big|_{\Theta^{-1}(\mathcal R)}\label{E:NewPDE1}\\
(\Theta^{-1}_*W)\wt r\Big|_{\Theta^{-1}(\mathcal S)}=(q\circ\pi_y\circ\Theta)\Big|_{\Theta^{-1}(\mathcal S)}\label{E:NewPDE2}
\eea
on some neighborhood of the origin, $(\nabla\wt r)(0)=0$, and $(\operatorname{Hess}\wt r)(0)=0$. Then, $r=\wt r\circ\Theta^{-1}$ is a smooth function on a neighborhood of the origin that satisfies \eqref{E:PDE1}
 and \eqref{E:PDE2}, and, owing to the chain rule \eqref{E:chain}, $(dd^cr)(0)=0$.  

 Note that \eqref{E:NewPDE1} and \eqref{E:NewPDE2}
are equivalent to
\bea
        (\nabla_u\wt r)(u,\sigma(u))\cdot B(u)+(\nabla_v\wt r)(u,\sigma(u))\cdot C(u)&=&P(u)\label{E:NPDE1}, \\
        (\nabla_u\wt r)(0,v)&=&Q(v),\label{E:NPDE2}
    \eea
respectively, 
where 
\beas    B(u)&=&b(u,\sigma(u)),\quad\quad\, \,\, 
    C(u)=c(u,\sigma(u)),\\
    P(u)&=&p(x(u,\sigma(u))),\quad 
    Q(v)=q(y(0,v))=q(v).
\eeas
We collect some relevant information. 
From \eqref{E:b}, \eqref{E:c}, \eqref{E:inv} and \eqref{E:phipsi}, we obtain that
    \bea
        B(0)&=&b(0,0)=-(D_yu)(0,0)=\frac{1}{\prod_{j=1}^n(1+\lambda_j^2)}A\label{E:B}\\
        C(0)&=&c(0,0)=-(D_yv)(0,0)=\frac{-1}{\prod_{j=1}^n(1+\lambda_j^2)}I_n.\label{E:C}
    \eea
From \eqref{E:pq}, it follows that 
\be
    P(0)=Q(0)=0\quad \text{and}\quad 
(DP)(0)=(DQ)(0)=0.\label{E:PQ}
\ee

We now produce an explicit $\wt r$ satisfying \eqref{E:NPDE1} and \eqref{E:NPDE2}. By \eqref{E:B}, \eqref{E:C}, and the fact that $(D\sigma)(0)=A$, we have that $\det\left(C(0)-(D\sigma)(0)\cdot B(0)\right)=-1$. Thus, there is some neighborhood $U$ of the origin in $\rl^n$ on which $C(u)-(D\sigma)(u)\cdot B(u)$ is defined and invertible for each $u$. Now, set
    \bes
        \wt r(u,v)= Q(v)\cdot u +\alpha(u)\cdot\left(v-\sigma(u)\right),\quad (u,v)\in U,
    \ees
where 
    \beas
        \alpha(u)&=&\left(P(u)-Q(\sigma(u))\cdot B(u)-u^T\cdot  (DQ)(\sigma(u))\cdot C(u)\right)\cdot \left(C(u)-(D\sigma)(u)\cdot B(u)\right)^{-1}\\
        &=:&\beta(u)\cdot\gamma(u).
    \eeas
From \eqref{E:PQ}, we have that  
    \be\label{E:alpha0}
    \alpha(0)=\beta(0)=0.
    \ee
Using the product rule \eqref{E:prod}, \eqref{E:PQ}, and that $\beta(0)=0$, we have that
    \beas
        (D\alpha)(0)&=&\gamma^T(0)\cdot (D\beta)(0)+(I_n\otimes \beta(0))\cdot (D\gamma)(0)\\        &=&\gamma^T(0)\cdot\left((DP)(0)+B(0)^T\cdot (DQ)(0)\cdot(D\sigma)(0)+(I_n\otimes Q(0))\cdot (DB)(0)\right)\\
        &&\qquad +\gamma^T(0)\cdot\left(C(0)^T\cdot (DQ)^T(0)\right) \\
        &=&0.
    \eeas

We now check that $\wt r$ satisfies all the desired properties. By the product rule \eqref{E:prod}, 
\beas
    (\nabla_u\wt r)(u,v)&=&Q(v)+\left(v-\sigma(u)\right)^T\cdot (D\alpha)(u)- \alpha(u)\cdot (D\sigma)(u)\\
    (\nabla_v\wt r)(u,v)&=& u^T\cdot (DQ)(v)+\alpha(u)\\
    (\operatorname{Hess}_{uu}\wt r)(u,v)&=& -(D\alpha)^T(u)\cdot (D\sigma)^T(u)+ \left(I_n\otimes (v-\sigma(u))^T\right)\cdot (D^2\alpha)(u)\\ 
    &&\qquad -(D\sigma)^T(u)\cdot (D\alpha)(u)-\left(I_n\otimes \alpha(u)\right) \cdot (D^2\sigma)(u)\\
    (\operatorname{Hess}_{vv}\wt r)(u,v)&=& \left(I_n\otimes u^T\right)\cdot (D^2Q)(v)\\
    (\operatorname{Hess}_{uv}\wt r)(u,v)&=&(DQ)(v)+(D\alpha)^T(u).
\eeas
Substituting $\nabla_u\wt r$ and $\nabla_v\wt r$ along $\{v=\sigma(u)\}$ in the left-hand side of \eqref{E:NewPDE1}, we obtain
\beas
&&Q(\sigma(u))\cdot B(u)-\alpha(u)\cdot (D\sigma)(u)\cdot B(u)+u^T\cdot (DQ)(\sigma(u))\cdot C(u)+\alpha(u)\cdot C(u)\\
&=& \alpha(u)\cdot\left(C(u)-(D\sigma)(u)\cdot B(u)\right)+Q(\sigma(u))\cdot B(u)+u^T\cdot (DQ)(\sigma(u))\cdot C(u)\\
&=& P(u).
\eeas
Thus, \eqref{E:NPDE1} holds. Next, from \eqref{E:alpha0}, we have that
\bes
(\nabla_v\wt r)(0,v)=\alpha(0)=0.
\ees
Thus, \eqref{E:NPDE2} holds. Finally, since $Q(0)=\sigma(0)^T=\alpha(0)=0$ and $(D\alpha)(0)=(DQ)(0)=0$,
\bes
    \nabla\wt r(0)=0\quad \text{and}\quad \operatorname{Hess}\wt r(0)=0.
\ees
We have, thus, produced $\wt r$ as desired, and the proof is complete.
\end{proof}

\section{Proof of Theorem~\ref{T:main}}\label{S:proof}

Let $\iota:M\rightarrow\Cn$ be as given. Suppose $\iota(M)$ is Lagrangian with respect to some K{\"a}hler form $\omega$ on $\Cn$. Then, by \cite{Ga00}, $\iota(M)$ is rationally convex in $\Cn$. Moreover, at each double point $p\in\iota(M)$, the union of the pair of transverse totally real tangent planes to $\iota(M)$ at $p$ is Lagrangian with respect to the linear K{\"a}hler form whose coefficients coincide with the coefficients of $\omega$ at $p$. Thus, by Proposition~\ref{P:planes}, $p$ is a diagonalizable double point of $\iota(M)$. 

Conversely, suppose $\iota(M)$ is rationally convex in $\Cn$ and the double points $p_1,...,p_m$ of $\iota(M)$ are all diagonalizable. We denote by $\mathbb B^n(p,r)$ the open Euclidean ball in $\Cn$ of radius $r>0$ and center $p\in\Cn$, and by $\overline {\mathbb B^n}(p,r)$, its closure. By Lemma~\ref{L:local}, there is an $\eps>0$ and smooth functions $f_j$ on $\mathbb B^n(p_j,\eps)$, $j=1,...,m$, so that for each $j=1,...,m$. 
\begin{itemize}
    \item [(i)] $\mathbb B^n (p_j,\eps)\cap \mathbb B^n (p_k,\eps)=\emptyset$ for all $k\neq j$,
    \item [(ii)] $dd^c\!f_j>0$ on $\mathbb B^n (p_j,\eps)$, and
    \item [(iii)] $\iota^*(d^c\!f_j)=0$ on $M\cap \iota^{-1}(\mathbb B^n(p_j,\eps))$.
\end{itemize}
Let $\de\in(0,\eps)$ be such that there is a smooth decreasing cut-off function $\chi:[0,\infty)\rightarrow[0,1]$ satisfying $\chi\equiv 1$ on $[0,\de]$ and $\chi\equiv 0$ on $[\eps,\infty)$. 
By \cite[Proposition~3.1]{Mi20}, there is a smooth function $\varphi:\Cn\rightarrow \rl$ such that 
    \begin{itemize}
        \item [(a)] $\varphi$ is plurisubharmonic on $\Cn$,
        \item [(b)] $\varphi$ is strictly plurisubharmonic on $\Cn\setminus \bigcup_{j=1}^m\mathbb B^n(p_j,\de/2)$,
        \item [(c)] $\iota^*dd^c\varphi=0$ on $M$.
    \end{itemize}
Since $d\chi\equiv 0$ on $\mathbb B^n(0,\de)$ and each $\overline{\mathbb B^n}(p_j,\eps)\setminus\mathbb B^n(p_j,\de)$ is a compact region in the set where $\varphi$ is strictly plurisubharmonic, there is a $C>0$ such that 
    \be\label{E:pos}
Cdd^c\varphi+\sum_{j=1}^md\chi_j\wedge d^c\!f_j>0,\quad \text{on }\Cn\setminus\bigcup_{j=1}^m\mathbb B^n(p_j,\de/2),
    \ee
where $\chi_j=\chi\left(\Vert z-p_j\Vert\right)$ on $\Cn$. Now, set 
        \bes        \omega=Cdd^c\varphi+\sum_{j=1}^md\left(\chi_jd^c\!f_j\right)
    \ees
on $\Cn$, where each $\chi_j d^c\!f_j$ is extended to all of $\Cn$ by setting it as zero outside $\mathbb B^n(p_j,\eps)$. 

Clearly $\om$ is a closed $(1,1)$-form. Moreover, 
\bes    \omega=Cdd^c\varphi+\sum_{j=1}^md\chi_j\wedge d^c\!f_j+\sum_{j=1}^m\chi_jdd^c\!f_j
\ees
is positive on $\cup_{j=1}^m\mathbb B^n(p_j,\de)$ owing to (ii), (a) and that $d\chi_j\equiv 0$ on $\mathbb B^n(p_j,\de)$. It is positive on $\Cn\setminus \cup_{j=1}^m\mathbb B^n(p_j,\de)$ owing to \eqref{E:pos} and the nonnegativity of each $\chi_jdd^cf_j$ on $\Cn$. Finally,
\bes    \iota^*\omega=C\iota^*dd^c\varphi+\sum_{j=1}^m\iota^*d\chi_j\wedge \iota^*d^c\!f_j+\sum_{j=1}^m\chi_jd(\iota^*d^c\!f_j)=0
\ees
owing to (c), (iii), and the fact that each $\chi_j\equiv 0$ outside $\mathbb B^n(p_j,\eps)$. Thus, $\iota(M)$ is Lagrangian with respect to the global K{\"a}hler form $\om$. This completes the proof of Theorem~\ref{T:main}.

\bibliography{RatCvx}{}
\bibliographystyle{plain}
\end{document}